\def \NN {\mathcal{N}}
\def \dist {{\rm dist}}
\def \IR {\mathbb{R}}
\DeclareMathOperator{\Rm}{Rm}
\DeclareMathOperator{\Ric}{Ric}
\newcommand*{\rom}[1]{\rm {\expandafter\@slowromancap\romannumeral #1@}}
\def\XXint#1#2#3{{\setbox0=\hbox{$#1{#2#3}{\int}$ }
\vcenter{\hbox{$#2#3$ }}\kern-.6\wd0}}
\protected\def\vts{%
  \ifmmode
    \mskip0.5\thinmuskip
  \else
    \ifhmode
      \kern0.08334em
    \fi
  \fi
}
\numberwithin{equation}{section}
\newtheorem{Theorem}{Theorem}[section]
\newtheorem{Proposition}[Theorem]{Proposition}
\newtheorem{Lemma}[Theorem]{Lemma}
\newtheorem{Corollary}[Theorem]{Corollary}
\theoremstyle{definition}
\newtheorem{Definition}[Theorem]{Definition}
\def \NN {\mathcal{N}}
\newcommand\blfootnote[1]{%
  \begingroup
  \renewcommand\thefootnote{}\footnote{#1}%
  \addtocounter{footnote}{-1}%
  \endgroup
}
\title{Lower bounds for the scalar curvatures of Ricci flow singularity models}
\author{Pak-Yeung Chan$^*$,\blfootnote{Department of Mathematics, University of California San Diego, La Jolla, CA 92093} Bennett Chow$^*$, Zilu Ma$^{**}$,\blfootnote{$^*$ Department of Mathematics, Rutgers University, Piscataway, NJ 08854} Yongjia Zhang$^{***}$\blfootnote{$^{**}$ School of Mathematical Sciences, Shanghai Jiao Tong University, Minhang District, Shanghai, China 200240}}
\date{}
\begin{document}


\maketitle
\begin{abstract}
    In a series of papers, Bamler \cite{Bam20a,Bam20b,Bam20c} further developed the high-dimensional theory of Hamilton's Ricci flow to include new monotonicity formulas, a completely general compactness theorem, and a long-sought partial regularity theory analogous to Cheeger--Colding theory.
In this paper we give an application of his theory to lower bounds for the scalar curvatures of singularity models for Ricci flow.
In the case of $4$-dimensional non-Ricci-flat steady soliton singularity models, we obtain as a consequence a quadratic decay lower bound for the scalar curvature.
\end{abstract}

\bigskip 

\section{Introduction and background}

To help formulate Ricci flow with surgery, one would like to better understand singularity models of the Ricci flow. This has been highly successful in dimension $3$ by the Hamilton--Perelman theory leading to the proof of the Poincar\'e and Thurston geometrization conjectures by Hamilton's
Ricci flow \cite{Per02,Per03a,Per03b}.
A deeper understanding of Ricci flow and singularity formation in dimension 3 has led to solutions to a pair of fundamental conjectures of Perelman; see Brendle \cite{Bre13,Bre20} and Bamler \cite{Bam18} and the references therein. Bamler and Kleiner \cite{BK22,BK21a} proved the generalized Smale conjecture using $3$-dimensional Ricci flow with surgery.
Furthermore, the understanding of $3$-dimensional ancient solutions essentially complete. See Brendle, Daskalapoulos, and Sesum \cite{BDS21}, Angenent, Brendle, Daskalapoulos, and Sesum \cite{ABDS22},
Bamler and Kleiner \cite{BK21b}, and Lai \cite{Lai20,Lai22}.

Among the higher dimensions, dimension 4 is the most hopeful. In this case, 
Bamler's theory \cite{Bam20a,Bam20b,Bam20c} yields the strongest results regarding Ricci flow singularity formation and compactness.
In general, regarding singularity models, one is interested in Ricci solitons since they are prototypical \cite{Ham88,Ham93} and \cite{Per02,Per03a}.
Among a vast literature on Ricci solitons,
important progress has been made by Munteanu and Wang (see \cite{MW19} and the references therein), especially on the geometric understanding of shrinkers.

A general question is: what curvature estimates hold for singularity models, and in particular, Ricci solitons? In this paper we apply Bamler's theory to make some further progress on lower bounds for the scalar curvature.

If a singularity model is Ricci flat, then by Perelman's no local collapsing theorem \cite{Per02}, it has Euclidean volume growth. If, in addition, the dimension is $4$, then by a theorem of Cheeger and Naber \cite[Corollary 8.86]{CN15}, it is an asymptotically locally Euclidean (ALE) space.
The Eguchi--Hanson metric, which is such a space, occurs as a singularity model of $4$-dimensional Ricci flow by Appleton \cite{App19}.
In this paper we consider the complementary case of non-Ricci-flat singularity models.

Let $(M^n,g_t)_{t\le 0}$ be an ancient Ricci flow, i.e., $\partial_t g_t = - 2 \operatorname{Ric}_{g_t}$, and $t\leq 0$ means that $t\in (-\infty,0]$, 
whose time slices are complete.
The conjugate heat operator is defined as $\Box^* = - \partial_t - \Delta_{g_t} + R_{g_t}$.
We write the conjugate heat kernel based at
the space-time point
$(o,0)$ as
\[
    K(o,0\,|\, x,t) = (4\pi|t|)^{-\frac{n}{2}} e^{-f_t(x)}.
\]
So, $\Box^*_{x,t} K=0$ and $\lim_{t\to 0_-}K(o,0\,|\,\cdot,t) = \delta_o$.
The pointed Nash entropy based at $(o,0)$ is
\begin{equation}
    \NN_{o,0}(\tau) : = \int_M f_t(x) \, K(o,0\,|\, x,t) \, dg_t (x) - \frac{n}{2},
\end{equation}
where $\tau=|t|$ (we will henceforth use this notation).

Henceforth, we assume that $(M^n,g_t)_{t\le 0}$ either is a finite-time singularity model or has bounded curvature over compact time intervals with a pointed \emph{Nash entropy lower bound}, namely,
\begin{equation}\tag{NELB}
    \label{eq: Nash entropy lower bd}
        \inf_{\tau>0} \NN_{o,0}(\tau) =: \mu_\infty > -\infty,
\end{equation}
for some point $o\in M$ (and thus for any point since $\inf\NN$ is independent of the base point by \cite[Proposition 4.6]{MZ21}). 
By a \emph{finite-time singularity model}
we mean a complete and non-flat ancient solution $(M^n,g_t)_{t\le 0}$ which arises as a blow-up limit of a Ricci flow $(\overline{M}^n,\overline{g}(t))_{t\in [0,T)}$ on a closed manifold $\overline{M}$, with a finite-time singularity at $T<\infty$. More precisely, one can find sequences of $(x_i,t_i)\in \overline{M}\times [0,T)$ and $\lambda_i \to\infty$ such that 
the rescaled solutions
$\big( \overline{M},\lambda_i\overline{g}(t_i+\tfrac{t}{\lambda_i}) \big) , t\in (-\lambda_it_i,0]$, $\mathbb{F}$-converge on compact time intervals to $(M, g(t)), t\in (-\infty, 0]$, in the sense of Bamler \cite{Bam20b} (see also \cite{CFSZ20, BCDMZ21}).

For each $t<0,$ let $(z_t,t)$ be an $H_n$-center of $(o,0)$, as defined in \cite{Bam20a}. That is, $\operatorname{Var}_t\big(\delta_{z_t},K(o,0\,|\, \cdot,t) \vts dg_t \big)  \leq H_n|t|$, where $H_n:= \frac{(n-1)\vts\pi^2}{2}+4$.
Such space-time points effectively represent the past of a point $(o,0)$ of a Ricci flow. 
We shall consider the following 
$H_n$-center
\emph{scalar curvature lower bound}
assumption 
\begin{equation}
    \tag{SCLB}
    \label{eq: sc lower bd near H-centers}
    \inf\Big\{(1+\tau)\,R(x,t)\,\Big|\,x\in B_t(z_t,D\sqrt{1+\tau}), t<0\Big\}:=a_0>0,
\end{equation}
for some constant $D<\infty.$
As we will see below, this assumption is suitable for applications to 
noncollapsed
steady solitons.


\section{Statements of the results}

The main results of this paper stem from the following global scalar curvature 
decay estimate for the ancient solutions in which we are interested.

\begin{Theorem}
\label{thm: general thm}
    Assume that a complete ancient solution $(M^n,g_t)$ is a finite-time singularity model or that $(M,g_t)$ has bounded curvature over compact time intervals with \eqref{eq: Nash entropy lower bd}.
    Further assume \eqref{eq: sc lower bd near H-centers}.
    If the constant $D$ in \eqref{eq: sc lower bd near H-centers} satisfies $D\ge \underline{D}(n,\mu_\infty)$, then
    \[
       (1+\tau)\, R(x,t) \ge \frac{c(n,\mu_\infty,a_0)}{f_t(x) + C(n) - \mu_\infty},
    \]
    for $(x,t)\in M\times(-\infty,0)$, where $a_0$ is the infimum in \eqref{eq: sc lower bd near H-centers} and $\mu_\infty$ is as in \eqref{eq: Nash entropy lower bd}.
\end{Theorem}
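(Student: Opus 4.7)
The plan is to \emph{re-base} the conjugate heat kernel at any space-time point $(x_0,t_0)$ where we wish to estimate $R$, use Bamler's Gaussian concentration to turn the local hypothesis \eqref{eq: sc lower bd near H-centers} into an integral scalar-curvature bound for the new kernel at some earlier time $s$, and then transport that integral bound to a pointwise bound at $(x_0,t_0)$ by monotonicity along the forward Ricci-flow evolution of $R$. The denominator $f_t(x)+C-\mu_\infty$ will emerge as the optimal ``look-back time'' needed to make the $H_n$-center of $(x_0,t_0)$ drift into the region where \eqref{eq: sc lower bd near H-centers} is assumed.

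First I would pick $\underline{D}(n,\mu_\infty)$ large enough that Bamler's Gaussian upper bound for the conjugate heat kernel (together with \eqref{eq: Nash entropy lower bd}) puts at least half of the mass of $K(o,0\,|\,\cdot,t)$ inside $B_t(z_t,D\sqrt{1+\tau})$; integrating \eqref{eq: sc lower bd near H-centers} then gives $\int R\,K(o,0\,|\,\cdot,t)\,dg_t\geq a_0/(2(1+\tau))$. For a general point $(x_0,t_0)$, the same reasoning applied to the conjugate heat kernel $K^{(x_0,t_0)}$ based there (whose $H_n$-center at time $s<t_0$ I denote by $w_s$) yields $\int R\,K^{(x_0,t_0)}(\cdot,s)\,dg_s\geq a_0'/(1+|s|)$ \emph{provided} that $w_s\in B_s(z_s,D\sqrt{1+|s|})$. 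The key geometric input is then a quantitative separation estimate
\[
  d_s(z_s,w_s)^2\ \leq\ C(n)\bigl(|s|+(t_0-s)+\tau_0(f_{t_0}(x_0)+C(n)-\mu_\infty)\bigr),
\]
which I would derive from Bamler's Wasserstein $W_1$-monotonicity for conjugate heat kernels combined with the Gaussian estimate $d_{t_0}(x_0,z_{t_0})^2\leq C(n)\tau_0\bigl(f_{t_0}(x_0)+C(n)-\mu_\infty\bigr)$; choosing $|s|$ of order $\tau_0(f_{t_0}(x_0)+C-\mu_\infty)$ then places $w_s$ inside the (SCLB) region.

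To convert the time-$s$ integral bound into a pointwise bound at $(x_0,t_0)$, set $\rho(\sigma):=\int R(y,\sigma)\,K^{(x_0,t_0)}(y,\sigma)\,dg_\sigma$ for $\sigma\leq t_0$. A direct computation using $\partial_tR=\Delta R+2|\Ric|^2$, the conjugate heat equation $\partial_\sigma K^{(x_0,t_0)}=-\Delta K^{(x_0,t_0)}+RK^{(x_0,t_0)}$, the volume evolution $\partial_\sigma dg_\sigma=-R\,dg_\sigma$, and one integration by parts yields $\tfrac{d}{d\sigma}\rho=\int 2|\Ric|^2\,K^{(x_0,t_0)}\,dg_\sigma\geq 0$; hence $\rho$ is monotone nondecreasing in $\sigma$. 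Since $K^{(x_0,t_0)}(\cdot,\sigma)\to\delta_{x_0}$ as $\sigma\nearrow t_0$, one gets $R(x_0,t_0)=\lim_{\sigma\nearrow t_0}\rho(\sigma)\geq\rho(s)\geq a_0'/(1+|s|)$. With the chosen $|s|\sim \tau_0(f_{t_0}(x_0)+C-\mu_\infty)$, one has $1+|s|\leq C(1+\tau_0)\bigl(f_{t_0}(x_0)+C-\mu_\infty\bigr)$, which rearranges to the asserted estimate. The main obstacle will be the $H_n$-center separation estimate above: everything hinges on getting the quantitative dependence on $f_{t_0}(x_0)$ correct using Bamler's Wasserstein monotonicity and Gaussian bounds; given that input the remaining steps are essentially mechanical within Bamler's framework.
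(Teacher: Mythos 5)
Your proposal is correct in outline and takes a genuinely different route from the paper's argument. The paper proves the estimate by a pointwise maximum principle applied to the barrier $F=\sqrt{1+\tau}\,R - c\rho^{-1}-4cA\sqrt{1+\tau}\,\rho^{-2}$, where $\rho_t=\sqrt{1+\tau}\,(f_t+C_0-\mu_\infty)$; the only analytic inputs there are the Gaussian upper bound for $K$, Perelman's differential Harnack inequality $w\le 0$ (which drives the evolution inequality $\tau\Box\rho\ge\tfrac12\rho-A\sqrt{1+\tau}$), and Chen's $R\ge 0$, after which the bulk of the work is in checking the boundary conditions as $x\to\infty$, $t\to 0^-$, and $t\to-\infty$. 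Your approach replaces the maximum principle entirely by two monotonicity formulas: the elementary fact that $\sigma\mapsto\int R\,K(x_0,t_0\,|\,\cdot,\sigma)\,dg_\sigma$ is nondecreasing (so that $R(x_0,t_0)\ge\rho(s)$), and Bamler's $W_1$-contraction for conjugate heat kernel measures based at different space-time points, which lets you control $d_s(z_s,w_s)$ and thereby ``import'' \eqref{eq: sc lower bd near H-centers} into an integral bound at the well-chosen look-back time $|s|\sim\tau_0(f_{t_0}(x_0)+C_0-\mu_\infty)$. Both proofs use the Gaussian upper bound in essentially the same way (to estimate $|x_0z_{t_0}|_{t_0}^2\lesssim\tau_0(f_{t_0}(x_0)+C_0-\mu_\infty)$), and both end up with the same algebraic rearrangement $1+|s|\lesssim(1+\tau_0)(f_{t_0}(x_0)+C_0-\mu_\infty)$. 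What your version buys is that no barrier needs to be designed and no boundary asymptotics need to be checked; what it costs is that you rely on the $W_1$-monotonicity for kernels at \emph{different} base points (a slightly heavier piece of Bamler's framework than the Gaussian bound alone) and you must justify the integration by parts and the limit $\sigma\nearrow t_0$ in the identity $\frac{d}{d\sigma}\int R\,K\,dg_\sigma=\int 2|\Ric|^2K\,dg_\sigma$, which requires curvature and kernel decay controls at spatial infinity. Two small imprecisions to fix in a write-up: (i) the condition ``$w_s\in B_s(z_s,D\sqrt{1+|s|})$'' should be strengthened to ``$B_s(w_s,\sqrt{2H_n(t_0-s)})\subset B_s(z_s,D\sqrt{1+|s|})$'' before you can integrate the pointwise bound from \eqref{eq: sc lower bd near H-centers} against the concentrated portion of $K(x_0,t_0\,|\,\cdot,s)$; and (ii) one should take, say, $|s|=2\tau_0(f_{t_0}(x_0)+C_0-\mu_\infty)$ to guarantee $s<t_0$ strictly even when $f_{t_0}(x_0)+C_0-\mu_\infty$ is close to its minimum value $1$. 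Neither is a real gap; both are absorbed into the constants $\underline D$ and $c$.
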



The following says that we can replace the assumption of an $H_n$-center
scalar curvature lower bound by the smoothness of the tangent flow at infinity.

\begin{Corollary}\label{Coro:ancient with smooth tangent flow}
    Assume that $(M,g_t)$ is a finite-time singularity model or that $(M,g_t)$ has bounded curvature over compact time intervals with \eqref{eq: Nash entropy lower bd}.
    Suppose further that each tangent flow at infinity of $(M,g_t)$ (as defined in \cite{Bam20b}) is smooth.
    Then 
    \[
       (1+\tau) R(x,t) \ge \frac{a}{f_t(x) + C(n) - \mu_\infty},
    \]
    where $a>0$ is a constant depending on the geometry of $(M,g_t)_{t\le 0}$.

\end{Corollary}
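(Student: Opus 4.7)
The plan is to derive the corollary directly from Theorem~\ref{thm: general thm}: it suffices to verify that \eqref{eq: sc lower bd near H-centers} holds with $D = \underline{D}(n,\mu_\infty)$ and some $a_0 > 0$, after which the estimate follows with $a = c(n,\mu_\infty,a_0)$. Throughout I assume $(M, g_t)$ is non-Ricci-flat, consistent with the paper's standing framework. The verification proceeds by contradiction: fix $D = \underline{D}(n,\mu_\infty)$ and suppose no positive $a_0$ works; then there are $(x_i, t_i)$ with $\tau_i = |t_i|$, $x_i \in B_{t_i}(z_{t_i}, D\sqrt{1+\tau_i})$, and $(1+\tau_i) R(x_i, t_i) \to 0$. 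I split into the cases $\{\tau_i\}$ bounded and $\tau_i \to \infty$.

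In the bounded-times case, using boundedness of curvature on compact time intervals (or the smoothness of a finite-time singularity model), both the $H_n$-centers $z_{t_i}$ and the points $x_i$ remain in a compact spacetime region. A subsequential limit $(x_\infty, t_\infty)$ then satisfies $R(x_\infty, t_\infty) = 0$, contradicting $R > 0$ everywhere on $M\times(-\infty,0)$ — a consequence of B.~Chen's non-negativity of scalar curvature for complete ancient solutions, the strong maximum principle, and the non-Ricci-flat assumption.

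In the unbounded case I blow down via $\tilde{g}^{(i)}_t := \tau_i^{-1} g_{\tau_i t}$. By Bamler's compactness \cite{Bam20b}, a subsequence $\mathbb{F}$-converges on compact time intervals to a tangent flow at infinity, which by hypothesis is a smooth gradient shrinking soliton $(M_\infty, g^\infty_t)$. This limit must be non-flat, for a Gaussian limit would force $\mu_\infty = 0$, which by Bamler's rigidity would make the original flow Euclidean, contradicting the non-Ricci-flat assumption. Smoothness of the limit together with Bamler's $\varepsilon$-regularity \cite{Bam20c} upgrades the convergence to smooth Cheeger--Gromov convergence on compact spacetime regions around the base points, with $z_{-\tau_i}$ converging to a base point $z_\infty$ of the shrinker. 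In the rescaled flow at time $-1$, the image of $x_i$ lies within distance $D\sqrt{1 + \tau_i^{-1}} \le 2D$ of $z_{-\tau_i}$; hence a subsequential limit $x_\infty$ lies in the closed $2D$-ball about $z_\infty$, and $R^\infty(x_\infty, -1) = \lim_i \tau_i R(x_i, -\tau_i) = 0$, contradicting $R^\infty > 0$ on the non-flat smooth shrinker (again by the strong maximum principle).

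The main obstacle I foresee is the last step, where Bamler's partial regularity must be deployed to promote the $\mathbb{F}$-limit to a smooth limit in a neighborhood of the base point and to pin down the location of the rescaled $H_n$-centers under blow-down, so that pointwise curvature comparisons are justified. Once these convergence/regularity ingredients are in place, both cases yield a contradiction, \eqref{eq: sc lower bd near H-centers} holds for the chosen $D$ with some $a_0 = a_0(M, g_t) > 0$, and Theorem~\ref{thm: general thm} delivers the stated estimate.
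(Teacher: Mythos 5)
Your proposal is essentially the same argument as the paper's: verify \eqref{eq: sc lower bd near H-centers} by contradiction, pass to a tangent flow at infinity under blow-down by $\tau_i$, use smoothness of that limit to extract a point with vanishing scalar curvature, and then appeal to the strong maximum principle and Perelman's monotonicity/rigidity to reach a contradiction with the solution being non-Euclidean. Your case split (bounded vs.\ unbounded $\tau_i$) is a more explicit version of the paper's assertion that the contradicting sequence must have $t_i\to-\infty$, and your discussion of upgrading $\mathbb{F}$-convergence to smooth local convergence via Bamler's $\varepsilon$-regularity fills in what the paper leaves implicit.

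The one place you fall slightly short of the paper is at the very start: you write off the Ricci-flat case as ``consistent with the paper's standing framework,'' but the corollary's hypotheses do not include non-Ricci-flatness, and the conclusion would be false for, say, a Ricci-flat ALE space. The paper actually \emph{derives} non-Ricci-flatness from the hypothesis that every tangent flow at infinity is smooth: for a Ricci-flat static flow, the tangent flow at infinity is its own blow-down, which contains a singularity unless the metric is flat, and the flat case is excluded. That derivation is a genuine (if brief) step, and you should include it rather than assume the conclusion. Once that is in place, your argument closes the loop correctly.
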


Alternative versions of this result include that it holds under the type-I assumption or under another more technical assumption stated below.
In particular, we have the following.

\begin{Corollary}\label{Coro:ancient type-I or harnack}
    Let $(M^n,g_t)_{t\le 0}$ be a complete and non-flat ancient flow with \eqref{eq: Nash entropy lower bd}. Suppose that either:
    \begin{enumerate}
        \item $(M,g_t)$ is of type-I, i.e., $\tau |{\Rm}| \le C$ uniformly over $M\times(-\infty,0]$ for some constant $C<\infty$; or
        \item $(M,g_t)$ satisfies Hamilton's trace Harnack estimate, i.e., for any smooth vector field $X$ on $M$,
        \begin{equation}\label{eq:TheTraceHarnack}
\frac{\partial R}{\partial t}-2\langle X, \nabla R\rangle+2\Ric(X,X) \ge 0,
        \end{equation}
         and $|{\Rm}|\le CR$ for some constant $C<\infty.$
\end{enumerate}
    Then 
    \[
       (1+\tau) R(x,t) \ge \frac{a}{f_t(x) + C(n) - \mu_\infty},
    \]
    where $a>0$ is a constant depending on the geometry of $(M,g_t)_{t\le 0},$ and $\mu_\infty$ is as in \eqref{eq: Nash entropy lower bd}.

\end{Corollary}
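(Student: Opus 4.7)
The plan is to verify the $H_n$-center scalar curvature lower bound \eqref{eq: sc lower bd near H-centers} with $D = \underline{D}(n,\mu_\infty)$ in both cases and then invoke Theorem~\ref{thm: general thm}. I argue by contradiction: if \eqref{eq: sc lower bd near H-centers} fails with every $a_0 > 0$ (at this $D$), there are sequences $t_i < 0$ and $x_i \in B_{t_i}(z_{t_i}, D\sqrt{1+\tau_i})$ with $(1+\tau_i)R(x_i,t_i) \to 0$. The bounded-$\tau_i$ regime is handled by compactness: $R \ge 0$ by Chen, and neither hypothesis is compatible with Ricci-flatness of a non-flat ancient solution (under (1), a Ricci-flat solution is static, violating $\tau|\Rm|\le C$ globally; under (2), $|\Rm|\le CR$ with $R\equiv 0$ forces flatness), so the strong maximum principle applied to $\partial_t R = \Delta R + 2|\Ric|^2$ yields $R > 0$ pointwise and hence a positive infimum on space-time compact regions (to which the relevant balls belong, using continuity of $t \mapsto z_t$ and smoothness of the flow). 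The essential case is therefore $\tau_i \to \infty$.

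Parabolically rescale: set $g^i(s) := \tau_i^{-1} g(\tau_i s)$. Then $(z_{t_i}, -1)$ is an $H_n$-center of the rescaled conjugate heat kernel at $(o,0)$, $R_{g^i(-1)}(x_i) = \tau_i R(x_i,t_i) \to 0$, and $x_i$ lies in the closed $g^i(-1)$-ball of radius $D$ about $z_{t_i}$. Under (1), the type-I bound scales to $|s|\,|\Rm|_{g^i(s)} \le C$, uniformly in $i$; together with \eqref{eq: Nash entropy lower bd}-based non-collapsing, Hamilton--Cheeger--Gromov compactness gives a subsequential pointed smooth limit $(M_\infty, g^\infty(s), z^\infty)$, identified by Bamler's theory as a tangent flow at infinity, and hence a gradient shrinking Ricci soliton, non-flat by the strictness of \eqref{eq: Nash entropy lower bd}. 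Under (2), in place of type-I, Hamilton's Harnack (time monotonicity of $R$ and a spatial-temporal Harnack comparison) combined with Bamler's bounds on the conjugate-heat-kernel average of $R$ (controlled via $\mu_\infty$) yields a pointwise upper bound for $R$ on a ball of controlled rescaled size about $(z_{t_i}, -1)$; the hypothesis $|\Rm| \le CR$ then upgrades this to a pointwise curvature bound, delivering the same smooth non-flat shrinking soliton limit.

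On a complete non-flat gradient shrinking Ricci soliton, $R > 0$ everywhere, so $R_{g^\infty(-1)} \ge c > 0$ on the closed $g^\infty(-1)$-ball of radius $D$ about $z^\infty$. That ball contains an accumulation point $x_\infty$ of the rescaled $x_i$, and smooth convergence contradicts $R_{g^i(-1)}(x_i) \to 0$. This contradiction establishes \eqref{eq: sc lower bd near H-centers}, whence Theorem~\ref{thm: general thm} produces the stated estimate. The principal obstacle is hypothesis (2): without a type-I bound, obtaining the pointwise curvature control needed for smooth Cheeger--Gromov compactness of the rescaled flows requires coupling Hamilton's Harnack with Bamler's $\mu_\infty$-based analysis of the conjugate heat kernel, whereas under (1) this reduces to a direct parabolic scaling argument.
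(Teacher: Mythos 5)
Your route is genuinely different from the paper's. The paper proves this corollary by \emph{reduction}: in case (2) it cites \cite[Proposition 2.4]{MZ21} (Harnack plus $|{\Rm}|\le CR$ implies Perelman's asymptotic shrinkers exist and are smooth), in case (1) it cites \cite[Theorem 4.1]{CZ11} (same conclusion under type-I), then cites \cite{CMZ21a} to identify Perelman's asymptotic shrinkers with Bamler's tangent flows at infinity, and finally applies Corollary~\ref{Coro:ancient with smooth tangent flow}. You instead attempt to verify \eqref{eq: sc lower bd near H-centers} directly by a blow-up/contradiction argument, in effect re-running the proof of Corollary~\ref{Coro:ancient with smooth tangent flow} with the smoothness of the blow-up limit supplied in-line rather than as a hypothesis. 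That is a legitimate alternative factoring, and under case (1) it works: the type-I bound rescales to a uniform two-sided curvature bound at every rescaled time, NELB gives noncollapsing, and Hamilton--Cheeger--Gromov compactness gives a smooth tangent flow.

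However, there is a real gap under case (2). You write that ``Hamilton's Harnack\ldots combined with Bamler's bounds on the conjugate-heat-kernel average of $R$\ldots yields a pointwise upper bound for $R$ on a ball of controlled rescaled size about $(z_{t_i},-1)$,'' which is then upgraded to a full curvature bound by $|{\Rm}|\le CR$. You offer no argument for this step, and it is not routine: passing from an integral (conjugate-heat-kernel-averaged) bound on $R$ to a pointwise bound on a controlled ball using only the trace Harnack inequality is precisely the nontrivial content of \cite[Proposition 2.4]{MZ21}. You acknowledge this as ``the principal obstacle,'' but acknowledging a gap does not close it; as written, case (2) is not proved. The paper's reduction to a cited result is exactly what is needed here.

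There is also a smaller logical slip in the concluding contradiction. You assert the limit shrinking soliton is ``non-flat by the strictness of \eqref{eq: Nash entropy lower bd}.'' But NELB only says $\mu_\infty > -\infty$; it does not rule out $\mu_\infty = 0$, in which case the tangent flow at infinity is flat Euclidean space and your assertion fails. The correct dichotomy, as in the paper's proof of Corollary~\ref{Coro:ancient with smooth tangent flow}, is: either the limit is non-flat, in which case $R_\infty > 0$ contradicts $R_{g^i(-1)}(x_i)\to 0$; or the limit is flat, forcing $\mu_\infty = 0$, whereupon Perelman's monotonicity forces the original ancient flow to be flat, contradicting the hypothesis of non-flatness. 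Your argument covers only the first branch. This is fixable, but it needs to be said.
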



We then apply the results above to steady gradient Ricci solitons. Recall that a triple $(M^n,g,f)$ is called a steady gradient Ricci soliton, if $(M^n,g)$ is a Riemannian manifold and $f$ is a smooth function on $M$ satisfying 
\[
\Ric=\nabla^2f.
\]
If we denote by $(\Phi_t)_{t\in\IR}$ the $1$-parameter group of diffeomorphisms generated by $-\nabla f$ with $\Phi_0={\rm id}$, then $g_t:=\Phi_t^*g$ solves the Ricci flow, and we call $(M^n,g_t)_{t\in\IR}$ the canonical form of $(M^n,g,f).$ It was proved by Hamilton \cite{Ham93} that on any steady gradient Ricci soliton, $\nabla \left(|\nabla f|^2+R\right)\equiv 0$ holds, and hence
\begin{equation}\label{bdd for naf}
    |\nabla f|^2+R=C_1,
\end{equation}
for some nonnegative constant $C_1$.
Let $|xy|:=d(x,y)$ denote the distance between $x$ and $y$; if we add a subscript $t$, then this means that the distance is with respect to $g_t$.

\begin{Definition}\label{def_noncollapsing}
A steady gradient Ricci soliton $(M^n,g,f)$ is called \emph{noncollapsed} if its canonical form $(M^n,g_t)_{t\in\mathbb{R}}$ has pointed Nash entropy bounded from below, i.e., 
\eqref{eq: Nash entropy lower bd} holds for some $o\in M$.
\end{Definition}

It is to be remarked that our definition of noncollapsing is different from that of Perelman \cite{Per02}. 
Bamler
\cite[Theorem 6.1]{Bam20a} and \cite[Proposition 4.6]{MZ21} show that Definition \ref{def_noncollapsing} implies Perelman's $\kappa$-noncollapsing on all scales. Nevertheless, Definition \ref{def_noncollapsing} is natural for applications, since due to the entropy monotonicity, any finite-time singularity model is 
necessarily
noncollapsed in the sense of Definition \ref{def_noncollapsing}.

In the case of noncollapsed steady solitons with nonnegative Ricci curvature, we can use the tangent flows at infinity to obtain curvature bounds near the $H_n$-centers, and then our estimate provides a global scalar curvature lower bound. Thus, we have the following.

\begin{Corollary}
\label{Coro: dist to H-center}
    Let $(M^n,g,f)$ be a complete and noncollapsed steady gradient Ricci soliton.
    Suppose that $\Ric\ge 0$ and $|{\Rm}|\le CR$ for some constant $C<\infty$.
    If $(M^n,g)$ is not Ricci flat,
    then for any $x\in M,\tau\ge \underline{\tau}(n),$ 
    we have that
    \[
        R(x) \ge \frac{a}{(C-\mu_\infty) \tau + |xx_\tau|^2},
    \]
    where 
    $x_\tau\in \partial B_{\tau}(o)$
    satisfies $\lambda(x_\tau,\tau_0)\le C, $ for some $\tau_0\in [\tau/C,C\tau].$ (See the definition of $\lambda$ in Section 3.3 or \cite[Section 2.1]{BCMZ21}.)
    Here, $C=C(n),$ and $a>0$ depends on the  geometry of $g.$
\end{Corollary}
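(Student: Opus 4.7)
The plan is to apply Corollary~\ref{Coro:ancient type-I or harnack}\,(2) to the canonical form $(M,g_t)_{t\in\IR}$, and then repackage the resulting spacetime estimate as a static bound on $(M,g)$ using the soliton diffeomorphisms $\Phi_t$. I first verify the Harnack hypothesis. On the canonical form, differentiating \eqref{bdd for naf} yields $\nabla R=-2\Ric(\nabla f,\cdot)^\sharp$, while the relation $g_t=\Phi_t^*g$ with $\partial_t\Phi_t=-\nabla f$ gives $\partial_t R=2\Ric(\nabla f,\nabla f)$. Consequently, for every vector field $X$,
\[
\frac{\partial R}{\partial t}-2\langle X,\nabla R\rangle+2\Ric(X,X)=2\Ric(X+\nabla f,\,X+\nabla f)\ge 0,
\]
where the last inequality uses $\Ric\ge 0$. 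Since $|\Rm|\le CR$ and noncollapsing \eqref{eq: Nash entropy lower bd} are assumed, Corollary~\ref{Coro:ancient type-I or harnack} applies and gives
\[
(1+\tau)\,R(x,t)\ge \frac{a}{f_t(x)+C(n)-\mu_\infty}\qquad\text{on }M\times(-\infty,0].
\]

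Next, fix $x\in M$ and $\tau>0$ and evaluate at the spacetime point $(\Phi_\tau(x),-\tau)$. Since $g_{-\tau}=\Phi_{-\tau}^*g$ and $R$ is diffeomorphism invariant, $R_{g_{-\tau}}(\Phi_\tau(x))=R_g(x)$, so
\[
R(x)\ge \frac{a}{(1+\tau)\bigl(f_{-\tau}(\Phi_\tau(x))+C(n)-\mu_\infty\bigr)}.
\]
The standard Gaussian upper bound on the conjugate heat kernel in a noncollapsed Ricci flow gives $f_{-\tau}(y)\le C+C\,d_{-\tau}(y,z_{-\tau})^2/\tau$ for any $H_n$-center $z_{-\tau}$ of $(o,0)$. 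Setting $x_\tau:=\Phi_{-\tau}(z_{-\tau})$, the fact that $\Phi_{-\tau}\colon (M,g_{-\tau})\to (M,g)$ is an isometry yields $d_{-\tau}(\Phi_\tau(x),z_{-\tau})=|xx_\tau|$. Substituting and absorbing constants for $\tau\ge\underline\tau(n)$ turns the denominator $(1+\tau)(C(n)-\mu_\infty+C+C|xx_\tau|^2/\tau)$ into $(C-\mu_\infty)\tau+|xx_\tau|^2$ up to a uniform multiplicative constant, yielding the claimed inequality.

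It remains to arrange $x_\tau\in\partial B_\tau(o)$ with $\lambda(x_\tau,\tau_0)\le C$ for some $\tau_0\in[\tau/C,C\tau]$. The $\lambda$-bound follows from noncollapsing at an $H_n$-center: by the comparison between local Nash entropy and local $\lambda$-functional recorded in \cite{BCMZ21}, $\lambda(z_{-\tau},\tau)$ is controlled by $\mu_\infty$, and since $\Phi_{-\tau}$ is an isometry the corresponding value at $x_\tau$ for a comparable scale $\tau_0$ is unchanged. Placing $x_\tau$ on the exact sphere $\partial B_\tau(o)$ uses the freedom in choosing the $H_n$-center (determined only up to distance $O(\sqrt\tau)$) together with the bound $|\nabla f|\le\sqrt{C_1}$, which forces the soliton trajectory through $o$ to move at bounded speed, so that a bounded radial perturbation lands $x_\tau$ on $\partial B_\tau(o)$ at the cost of lower-order terms already absorbed into the linear-in-$\tau$ summand. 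The main technical obstacle is this last identification: pinning $x_\tau$ precisely onto $\partial B_\tau(o)$ while retaining the $\lambda$-bound relies on the finer analysis of $H_n$-center trajectories along the soliton flow developed in \cite{BCMZ21}.
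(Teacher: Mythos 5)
Your high-level strategy matches the paper's: run a curvature lower bound on the canonical form, translate back to the static metric via the soliton diffeomorphisms, and control the potential $f_t$ by distance to an anchor point. You also verify the trace Harnack hypothesis from $\Ric\ge 0$ by a clean direct computation, which is nice (the paper outsources this to \cite[Lemma 5.1]{MZ21}). However, two of your key intermediate steps do not hold up.

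\textbf{The upper bound on $f_{-\tau}$ is not a ``Gaussian upper bound.''} You invoke ``the standard Gaussian upper bound on the conjugate heat kernel'' to claim $f_{-\tau}(y)\le C + C\,d_{-\tau}(y,z_{-\tau})^2/\tau$. An \emph{upper} bound on $K=(4\pi\tau)^{-n/2}e^{-f}$ gives a \emph{lower} bound on $f$, not an upper bound; Bamler's Theorem 7.2 (which the paper uses at the start of the proof of Theorem~\ref{thm: general thm}) goes in exactly that direction. What you actually need is a \emph{lower} bound on $K$, which is not a consequence of noncollapsing alone. The paper gets the upper bound on $f$ by a completely different mechanism: Perelman's inequality $f\le\ell$ (\cite[Corollary 9.5]{Per02}) combined with the gradient estimate $\tau|\nabla\ell|^2\le C\ell$, which is where the trace Harnack enters a second time (beyond merely verifying the hypothesis of Corollary~\ref{Coro:ancient type-I or harnack}). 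Integrating this gradient bound from a point where $\ell$ is bounded gives the quadratic growth of $\ell$, hence of $f$. Without this, your chain of inequalities is missing its main link.

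\textbf{The anchor point $x_\tau$ is a $\lambda$-center, not an $H_n$-center.} You define $x_\tau:=\Phi_{-\tau}(z_{-\tau})$ from an $H_n$-center and then propose to ``pin it onto $\partial B_\tau(o)$'' and argue a $\lambda$-bound via ``a bounded radial perturbation.'' This does not work: $H_n$-centers and $\lambda$-centers are different objects with no a priori quantitative comparison, and there is no control keeping the $H_n$-center at distance exactly $\tau$ from $o$. The paper instead invokes \cite[Lemma 2.4]{BCMZ21}, which \emph{directly constructs} a point $x_\tau\in\partial B_\tau(o)$ with $\lambda(x_\tau,\tau_0)\le C$ for some $\tau_0\in[\tau/C,C\tau]$; this lemma is the source of the anchor point, and the reduced-distance estimate above is what converts the $\lambda$-bound at $x_\tau$ into the $f_{-\tau_0}$-bound at a general point $x$. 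You correctly sense the gap (``the main technical obstacle is this last identification''), but it is not a technicality to be absorbed --- it is the crux, and the intended route goes through $\lambda$-centers and the $\ell$-function, not $H_n$-centers and heat-kernel lower bounds. Finally, note the evaluation time in the paper is $-\tau_0$ (with $\tau_0$ comparable to $\tau$), not $-\tau$.
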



We remark that this lower bound is not sharp for 
the hypothesized class of
steady solitons,
e.g., Lai's noncollapsed examples in dimension $4$ \cite{Lai20}.
However, this estimate is sharp for $3$-cylindrical steady solitons. 
In particular, if $f$ grows linearly and the level sets $\Sigma_\tau=\{f=\tau\}$ have diameters bounded by $C\sqrt{\tau}$ (this is the case when 
$R(x)\le \frac{C}{1+|xo|}$;
see Deng and Zhu \cite[Proposition 3.3]{DZ20b}), then Corollary \ref{Coro: dist to H-center} gives the improved estimate
$$R(x)\ge c/\tau\ge 
c'/|xo|,
$$
for any $x\in \Sigma_{\tau}$, if $\tau\ge \underline{\tau}(n).$
This can be viewed as 
an alternative proof of a result by Deng and Zhu
\cite[Theorem 2.5]{DZ20a} in this case.
Note that we do not need to assume the existence of critical points of $f.$

In fact, the arguments in \cite[Theorem 2.5]{DZ20a} or \cite[Proposition 4.3]{DZ18} prove the following. 
\begin{Theorem}[Deng and Zhu]
    Let $(M^n,g,f)$ be a complete noncollapsed steady gradient soliton. Suppose that $\Ric> 0$ and $|{\Rm}|\le CR$ for some constant $C<\infty.$ If $f$ admits a critical point, then 
    \[
        R(x)\ge c/|xo|,
    \]
    for any $x\notin B_{r_0}(o)$, for some constant $c>0$ and $r_0<\infty.$
\end{Theorem}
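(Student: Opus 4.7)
The plan is to follow Deng--Zhu's approach, exploiting the convexity forced by $\Ric>0$ together with the presence of a critical point.

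\textbf{Setup.} Since $\nabla^2 f=\Ric>0$, $f$ is strictly convex; the hypothesized critical point is therefore unique and is the unique global minimum of $f$, which I will call $o$. Hamilton's identity $|\nabla f|^2+R=C_1$ then gives $R(o)=C_1>0$ (so the soliton is non-Ricci-flat), $R\le C_1$, and $|\nabla f|\le\sqrt{C_1}$, making $f$ globally $\sqrt{C_1}$-Lipschitz. Next, I would combine the noncollapsing hypothesis with $|\Rm|\le CR$ and the tangent-flow-at-infinity machinery behind Corollary \ref{Coro:ancient with smooth tangent flow} to conclude $R\to 0$ at infinity; consequently $|\nabla f|\to\sqrt{C_1}$, and $f(x)/|xo|\to\sqrt{C_1}$ as $|xo|\to\infty$.

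\textbf{Gradient trajectories.} For $x$ with $|xo|$ large, I consider the integral curve $\gamma\colon[0,\infty)\to M$ of $-\nabla f$ with $\gamma(0)=x$. Strict convexity of $f$ ensures $\gamma(t)\to o$ as $t\to\infty$, and the soliton identity $\nabla R=-2\Ric(\nabla f,\cdot)$ yields
\[
    \frac{d}{dt}R(\gamma(t))=2\Ric(\nabla f,\nabla f)(\gamma(t))\ge 0,
\]
so $R\circ\gamma$ is monotone increasing from $R(x)$ to $R(o)=C_1$. Integrating produces the exact identity $C_1-R(x)=2\int_0^\infty\Ric(\nabla f,\nabla f)(\gamma(t))\,dt$. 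Since $|\dot\gamma|=|\nabla f|\to\sqrt{C_1}$ far from $o$, the curve $\gamma$ traverses distance $\sim|xo|$ in time $\sim|xo|/\sqrt{C_1}$.

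\textbf{Main obstacle.} The hard part will be converting the integral identity into the sharp lower bound $R(x)\ge c/|xo|$. A naive estimate using $|\Ric|\le CR$ and $|\nabla f|\le\sqrt{C_1}$ gives only $\frac{dR}{dt}\le 2CC_1\,R$ along $\gamma$, which yields merely exponential control of $R$ and is therefore too weak. Following Deng--Zhu, I would instead analyze the level sets $\Sigma_s=\{f=s\}$: strict positivity of $\Ric$ together with $|\Rm|\le CR$ controls the mean curvature and second fundamental form of each $\Sigma_s$, and coupling these estimates with the ODE satisfied by $R$ along gradient flow lines crossing successive $\Sigma_s$ is what rules out faster-than-$1/s$ decay of $R$. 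Since $f(x)\sim\sqrt{C_1}|xo|$ by the setup step, this translates to $R(x)\ge c/|xo|$. This refined level-set analysis is the central technical difficulty of the proof.
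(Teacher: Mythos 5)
There is a genuine gap, and you flag it yourself. Your setup is correct (strict convexity, $R(o)=C_1>0$, $R\le C_1$, $|\nabla f|\le\sqrt{C_1}$, monotonicity of $R$ along $-\nabla f$ trajectories via $\nabla R=-2\Ric(\nabla f,\cdot)$), but the passage from the integral identity to the sharp $c/|xo|$ lower bound is exactly what you leave undone, and the ``level-set analysis'' you gesture at is not actually the mechanism Deng--Zhu use. The missing ingredient is a Harnack-type inequality. In \cite[Proposition 4.3]{DZ18} and \cite[Theorem 2.5]{DZ20a}, the hypothesis of nonnegative curvature operator (resp.\ nonnegative bisectional curvature) is used for exactly one purpose: to guarantee Hamilton's trace Harnack estimate \eqref{eq:TheTraceHarnack}, which yields a space-time Harnack inequality for the scalar curvature \cite[(3.3)]{DZ18}; that Harnack inequality, integrated along space-time paths and combined with the translating structure of the canonical form, is what produces the inverse-linear decay. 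The content of the present statement, as the paper's sketch explains, is that \cite[Lemma 5.1]{MZ21} shows $\Ric\ge 0$ on a steady soliton already implies the trace Harnack estimate, so the remaining curvature hypotheses of Deng--Zhu can be dropped and their argument carries over verbatim. Your proposal never invokes any Harnack inequality, and, as you correctly observe, the naive pointwise ODE bound $dR/dt\le 2CC_1\,R$ along gradient trajectories gives only exponential control (worse, in fact, since the trajectory needs infinite time to reach $o$). Without the Harnack step (or an equally effective substitute), the convexity and monotonicity facts you assembled do not yield the $1/|xo|$ lower bound.
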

\begin{proof}[Sketch of the proof]
    \cite[Theorem 2.5]{DZ20a} uses the same arguments as \cite[Proposition 4.3]{DZ18}, where they assumed nonnegative curvature operator or bisectional curvature only to obtain Hamilton's trace Harnack estimate \eqref{eq:TheTraceHarnack} which implies a Harnack estimate for the scalar curvature \cite[(3.3)]{DZ18}. 
    As observed in \cite[Lemma 5.1]{MZ21}, nonnegative Ricci curvature on steady solitons implies Hamilton's trace Harnack estimate. The rest of the arguments are the same as in the proof of \cite[Proposition 4.3]{DZ18}.
\end{proof}

The estimate in the Corollary \ref{Coro: dist to H-center} implies that 
$$\limsup_{x\to\infty}\,R(x)|xo|>0,$$ 
which is a statement stronger than infinite ASCR. Indeed, by taking $x=x_{\tau}\in \partial B_{\tau}(o)$, 
we see that for all $\tau\gg 1$,
\[
 R(x_{\tau}) \ge \frac{a}{(C-\mu_\infty) \tau }= \frac{a}{(C-\mu_\infty)|x_\tau o|}.
\]
Hence, $\limsup_{x\to\infty}\, R(x)|xo|>0$.\smallskip

A main consequence of Theorem \ref{thm: general thm}, applicable to $4$-dimensional steady soliton singularity models, is:

\begin{Corollary}\label{Coro: 4d steady quadratic}
Let $(M^4,g,f)$ be a $4$-dimensional noncollapsed and non-Ricci-flat steady gradient Ricci soliton with bounded curvature. Let $o\in M$ be a fixed point. Then there exists a 
positive
constant $c$ depending on the soliton and $o$ such that
\begin{equation*}
    R(x)\ge\frac{c}{1+|xo|^2}.
\end{equation*}
\end{Corollary}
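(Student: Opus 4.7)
The plan is to apply Theorem~\ref{thm: general thm} to the canonical Ricci flow $(M,g_t)_{t\in\IR}$ of $(M,g,f)$, and then unfold the resulting time-dependent inequality into a pointwise statement on $(M,g)$. Noncollapsing directly gives \eqref{eq: Nash entropy lower bd}, and the steady soliton has bounded curvature on compact time intervals, so the only nontrivial hypothesis of Theorem~\ref{thm: general thm} that I must verify is the $H_n$-center scalar curvature lower bound \eqref{eq: sc lower bd near H-centers}. This verification is the main obstacle of the proof.

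To verify \eqref{eq: sc lower bd near H-centers}, I argue separately for bounded and large $\tau$. Note first that $R>0$ everywhere on $(M,g)$: by Chen's theorem $R\ge 0$ on any complete ancient Ricci flow, and the strong maximum principle applied to $\partial_t R=\Delta R+2|\Ric|^2$ forbids $R\equiv 0$ since that would force $\Ric\equiv 0$, contradicting non-Ricci-flatness. For $\tau$ in any bounded range, the steady soliton identity $R(x,t)=R_g(\Phi_t(x))$, together with $|\nabla f|\le\sqrt{C_1}$ (which bounds the $g$-displacement of $\Phi_t$ linearly in $|t|$) and the $H_n$-center estimate $d_{g_t}(o,z_t)\le\sqrt{H_n\tau}$, show that $\Phi_t$ carries $B_t(z_t,D\sqrt{1+\tau})$ into a fixed compact $g$-neighborhood of $o$, on which $R$ is bounded below by a positive constant. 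For $\tau\to\infty$, I invoke Bamler's theory: every tangent flow at infinity of $(M,g_t)$ based at $(o,0)$ is a (possibly singular) shrinking soliton whose Nash entropy at the base point equals $\mu_\infty$. Nash entropy rigidity (equality $\mu_\infty=0$ would force the flow to be Gaussian, hence Ricci flat) together with non-Ricci-flatness forces $\mu_\infty<0$, so the tangent flow is a non-Gaussian shrinker. Bamler's codimension-$4$ partial regularity in dimension~$4$ leaves a large smooth piece around the tip, where the shrinker equation and the strong maximum principle yield a uniform positive lower bound for the scalar curvature; $\mathbb{F}$-convergence then transfers this to $(1+\tau)R(x,t)\ge a_0>0$ on $B_t(z_t,D\sqrt{1+\tau})$ for all $\tau\ge\tau_0$ and any preassigned large $D$.

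With \eqref{eq: sc lower bd near H-centers} verified, Theorem~\ref{thm: general thm} gives $(1+\tau)R_{g_t}(x)\ge c/(f_t(x)+C(n)-\mu_\infty)$ on $M\times(-\infty,0)$. I evaluate this at $(x,t)=(\Phi_1(y),-1)$ for fixed $y\in M$, so that $\tau=1$ and $R_{g_{-1}}(\Phi_1(y))=R_g(y)$. A Hein--Naber/Bamler type gradient estimate on $\sqrt{f_t+\tfrac{n}{2}}$, integrated from the $H_n$-center $z_{-1}$ and combined with a standard bound on $f_{-1}(z_{-1})$ in terms of $n$ and $\mu_\infty$, yields
\[
    f_{-1}(\Phi_1(y))+C(n)-\mu_\infty \le C\bigl(1+d_{g_{-1}}(\Phi_1(y),z_{-1})^2\bigr).
\]
The identity $d_{g_{-1}}(\Phi_1(y),z_{-1})=d_g(y,\Phi_{-1}(z_{-1}))$ combined with $|\nabla f|\le\sqrt{C_1}$ and $d_{g_{-1}}(o,z_{-1})\le\sqrt{H_n}$ shows $d_g(o,\Phi_{-1}(z_{-1}))\le C$, so $d_{g_{-1}}(\Phi_1(y),z_{-1})\le|yo|+C$. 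Substituting gives the desired bound $R_g(y)\ge c/(1+|yo|^2)$.

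The hard part, as noted, is the verification of \eqref{eq: sc lower bd near H-centers} for large $\tau$: ruling out a Gaussian tangent flow at infinity via Nash entropy rigidity, and then extracting a uniform scalar curvature lower bound from a possibly singular tangent shrinker, both rely substantively on Bamler's dimension-$4$ compactness and partial regularity theory.
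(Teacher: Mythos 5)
Your overall strategy—verify \eqref{eq: Nash entropy lower bd} and \eqref{eq: sc lower bd near H-centers}, apply Theorem~\ref{thm: general thm} at $\tau=1$, then unwind the soliton identities to get a static inequality—is the right shape, and the final conversion (evaluating at $(\Phi_1(y),-1)$, bounding $f_{-1}(\Phi_1(y))$ in terms of $|yo|^2$ via a gradient estimate or, as the paper does, via a Gaussian lower bound on the conjugate heat kernel from \cite[Theorem 26.31]{RFV3} together with Bishop--Gromov) is essentially sound. However, your verification of \eqref{eq: sc lower bd near H-centers} for large $\tau$ has a genuine gap that the paper avoids.

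The issue is your claim that ``Bamler's codimension-$4$ partial regularity in dimension~$4$ leaves a large smooth piece around the tip, where the shrinker equation and the strong maximum principle yield a uniform positive lower bound for the scalar curvature; $\mathbb{F}$-convergence then transfers this.'' Codimension-$4$ partial regularity for a $4$-dimensional tangent flow at infinity leaves open the possibility of isolated (orbifold-type) singular points, and the $H_n$-center ball $B(z_\infty,D)$ in the limit may contain such a point. Near an isolated singularity asymptotic to $\IR^4/\Gamma$, the scalar curvature of the limiting shrinker tends to zero, so there is no uniform positive lower bound on the regular part of $B(z_\infty,D)$, and the smooth $\mathbb{F}$-convergence (which is only valid away from the singular set) cannot transfer a bound that does not exist. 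Equivalently, in the contradiction argument the bad point $x_\infty$ could converge to a singular point of the tangent flow, and then one cannot invoke the strong maximum principle for the smooth shrinker equation. What is actually needed is \emph{smoothness} of the tangent flow at infinity, not just codimension-$4$ regularity; the paper supplies this via the dimension-specific classification in \cite[Proposition 3.1]{BCDMZ21} (for $4$-dimensional noncollapsed steady solitons, the tangent flow at infinity is unique and smooth—essentially $\IR^4/\Gamma$ or a cylindrical quotient, and non-Ricci-flatness rules out the flat case), after which Corollary~\ref{Coro:ancient with smooth tangent flow} applies directly. Your Nash-entropy-rigidity argument correctly rules out $\mu_\infty=0$, and your treatment of bounded $\tau$ is fine; it is precisely the passage from codimension-$4$ regularity to actual smoothness near the $H_n$-centers that is missing, and that step requires the steady-soliton-specific structure theory of \cite{BCDMZ21}.

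A minor additional point: the estimate $d_{g_{-1}}(o,z_{-1})\le\sqrt{H_n}$ is not quite what the $H_n$-center definition gives (it controls $W_1$-distance to the conjugate heat kernel measure $\nu_{-1}$, not the distance to $o$ across the time interval $[-1,0]$). One obtains $d_{g_{-1}}(o,z_{-1})\le C$ for a constant depending on the curvature bound via an $\ell$-distance or drift estimate; this does not affect the conclusion but should be stated correctly.
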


Hence, any $4$-dimensional steady soliton singularity model either is $\mathbb{R}^4/\Gamma$, or is a Ricci flat ALE, or has scalar curvature decaying at most quadratically.

By Munteanu, Sung, and Wang \cite{MSW19} (generalizing \cite{CLY11}), the general exponential lower bound for steady solitons $(M^n,g,f)$ with potential functions bounded from 
below
is:
\[
    R\ge ce^{-f}.
\]
Another  
asymptotic curvature estimate
for steady solitons satisfying the conditions in Corollary \ref{Coro: dist to H-center} is due to Ma and Zhang \cite[Corollary 5.2]{MZ21}:
\[
    {\rm ASCR}(g) := \limsup_{x\to\infty} R(x)
    |xo|^2 = \infty.
\]

We mention a related result by Han in \cite{Han20}, where he proves the following asymptotic curvature estimates: Let $(M^n,g,f)$ be a complete steady gradient Ricci soliton with $\sec\ge 0,$ $\Ric>0$, and where the scalar curvature decays uniformly. Then
\[
    \liminf_{x\to \infty} R(x)|xo|^{\alpha} = 0,
\]
for any $\alpha\in (0,1).$ 

We remark that one may conjecture that $R$ has an inverse linear lower bound in distance
for general steady gradient Ricci solitons without nonnegative curvature conditions.

\section{Proofs of the results}

\subsection{Proof of Theorem \ref{thm: general thm}}

\begin{proof}[Proof of Theorem \ref{thm: general thm}]

By \cite[Theorem 7.2]{Bam20a}, for any $x\in M,$ $t<0,$ we have the Gaussian upper bound for the conjugate heat kernel
\[
    |t|^{n/2}K(o,0\,|\,x,t)
    \le C\exp\left(-\mu_\infty - \frac{|xz_t|_t^2}{9|t|}\right),
\]
where $(z_t,t)$ is an $H_n$-center of $(o,0)$ and $\mu_\infty$ is the constant in \eqref{eq: Nash entropy lower bd}. Thus,
\[
    f_t(x) \geq   - C_0 +\mu_\infty+ 1+\frac{|xz_t|_t^2}{9\tau},
\]
for some $C_0=C_0(n)>0.$ In the following, we write
\begin{equation}\label{eq: rho quadratic}
        \rho_t(x) := \sqrt{1+\tau}\left(f_t(x) + C_0 - \mu_\infty\right) 
    \ge \sqrt{1+\tau} \left(1 + \frac{|xz_t|_t^2}{9\tau}\right).
\end{equation}

Since $\partial_t f = - \Delta f +|\nabla f|^2  - R  + \frac{n}{2\tau} $,
we have that 
\[
    \tau \Box f = f-\tfrac{n}{2}-w,
\]
where $\Box=\partial_t - \Delta_{g_t} $ is the heat operator. By Perelman's differential Harnack estimate \cite[\S 9]{Per02}, we have
\[
    w := \tau(2\Delta f-|\nabla f|^2+R) + f-n \le 0.
\]
So
\begin{align*}
    \tau \Box \rho &
    \ge -\frac{\tau}{2(1+\tau)}\rho + \sqrt{1+\tau} \, \big(f-\tfrac{n}{2}\big) 
    \ge  \tfrac{1}{2}\rho - A \sqrt{1+\tau},
\end{align*}
where
\[
    A=A(n,\mu_\infty) := C_0 - \mu_\infty + \tfrac{n}{2}.
\]
Then
\begin{align*}
    \tau \Box \rho^{-1} 
    &= - \tau\rho^{-2}\,\Box \rho - 2\tau\rho^{-3} \, |\nabla f|^2 \medskip  \\
    &\le -\tfrac{1}{2}\rho^{-1} + A \sqrt{1+\tau}\,\rho^{-2}.
\end{align*}
Similarly,
\begin{align*}
    \tau \Box \rho^{-2}
    &
    \le -\rho^{-2} + 2 A \sqrt{1+\tau}\,\rho^{-3},
\end{align*}
and
\[
    \tau \Box(\sqrt{1+\tau}\,\rho^{-2})
    \le - \sqrt{1+\tau}\,\rho^{-2} + 2A(1+\tau)\rho^{-3}.
\]
So
\[
    \tau \Box \big( \rho^{-1}+4A\sqrt{1+\tau}\, \rho^{-2} \big)
    \le -\tfrac{1}{2} \left(\rho^{-1}+4A\sqrt{1+\tau}\,\rho^{-2}\right)
     - A \sqrt{1+\tau}\,\rho^{-3}\left(\rho-8A\sqrt{1+\tau}\,\right).
\]
Recall that
\[
    \Box (\sqrt{1+\tau} \, R) = - \frac{R}{2\sqrt{1+\tau}} + 2\sqrt{1+\tau} \,  |{\Ric}|^2
    \ge - \frac{R}{2\sqrt{1+\tau}}.
\]
Let us define 
\begin{equation}\label{def: F}
      F := \sqrt{1+\tau} \, R - c \rho^{-1} - 4cA\sqrt{1+\tau}\,\rho^{-2},
\end{equation}
where $c=c(n,\mu_\infty,a_0)$ will be determined in the course of the proof, and where $\mu_\infty$ and $a_0$ are the constants in \eqref{eq: Nash entropy lower bd} and \eqref{eq: sc lower bd near H-centers}, respectively.
Then
\begin{equation}\label{eq: the PDE}
        \tau\Box F \ge -\tfrac{1}{2}F + cA \sqrt{1+\tau}\,\rho^{-3}\left(\rho-8A\sqrt{1+\tau}\,\right).
\end{equation}

Next, we aim to show that $\inf_{M\times(-\infty,0)} F\ge 0.$ To apply the maximum principle to \eqref{eq: the PDE}, we need to verify the boundary conditions. Note that since the operator $\tau\Box$ and the function $\rho$ become singular as $t\to 0-$, we also check the boundary condition on $M\times\{0\}$ to avoid invoking 
advanced maximum principles.\medskip  

\noindent\textbf{Claim.} \emph{By taking $c\le\overline{c}(n,\mu_\infty,a_0):=\frac{a_0}{1+4A}$, we have}
\begin{equation}\label{eq:boundary condition}
    \liminf_{x\to\infty}F(x,t)\ge 0,
\end{equation} 
\emph{uniformly for $t\in I$, where $I\subset(-\infty,0)$ is any compact interval, and}
\begin{equation}\label{eq:initial condition 1}
    \liminf_{t\to 0-}\left(\inf_{x\in M}F(x,t)\right)\ge 0,
\end{equation} 
\begin{equation}\label{eq:initial condition}
    \liminf_{t\to-\infty}\left(\inf_{x\in M}F(x,t)\right)\ge 0.
\end{equation}

\begin{proof}[Proof of the claim.]

Inequality
\eqref{eq:boundary condition} is a direct consequence of Chen's result that $R\ge 0$ (see \cite[Corollary 2.5]{Che09}), the quadratic growth of $\rho$ from \eqref{eq: rho quadratic}, the equivalence of the metrics on a compact time interval, and the fact that the $H_n$-centers of a fixed point do not drift with infinite velocity (a consequence of the estimate of Perelman's $\ell$-distance).

To prove \eqref{eq:initial condition 1}, we fix an arbitrary time $t\in(-\infty,0)$. Let $x\in M$ be an arbitrary point. If $x\in B_t(z_t,D\sqrt{1+\tau}\,)$, then by \eqref{eq: sc lower bd near H-centers} and \eqref{eq: rho quadratic}, we have
\begin{align*}
    F(x,t)\ge \frac{a_0}{\sqrt{1+\tau}}-\frac{c(1+4A)}{\sqrt{1+\tau}}\ge0,
\end{align*}
provided we take $c\le \frac{a_0}{1+4A}$. If $x\not\in B_t(z_t,D\sqrt{1+\tau})$, then we have
$$\rho(x,t)\ge \sqrt{1+\tau}\left(1+\frac{D^2(1+\tau)}{\tau}\right)\ge \frac{D^2}{\tau},$$
and thus
\begin{align*}
    F(x,t)\ge -\frac{c\tau}{D^2}-\frac{4cA\tau^2\sqrt{1+\tau}}{D^4}.
\end{align*}
Therefore, we have
$$\inf_{x\in M}F(x,t)\ge -C(A,D)\tau\to 0 \quad\text{ as }\quad t\to 0-,$$
and \eqref{eq:initial condition 1} follows immediately.

Finally, for \eqref{eq:initial condition}, recall that $R\ge 0$ and $\rho\ge \sqrt{1+\tau}$. We have
$$\inf_{x\in M}F(x,t)\ge-\frac{c(1+4A)}{\sqrt{1+\tau}}\quad\text{ for all }\quad t<0,$$
and \eqref{eq:initial condition} is proved.
\end{proof}

Assume, for a contradiction, that $\inf_{M\times(-\infty,0)}F<0$. By \eqref{eq:boundary condition}, \eqref{eq:initial condition 1}, and \eqref{eq:initial condition}, we have that the space-time infimum of $F$ must be attained at some point $(x_0,t_0)\in M\times(-\infty,0)$. Applying \eqref{eq: the PDE} at $(x_0,t_0)$, we have 
$$0\ge\tau\Box F\ge -\tfrac{1}{2}F + cA \sqrt{1+\tau}\,\rho^{-3}\left(\rho-8A\sqrt{1+\tau}\,\right)\quad\text{at}\quad (x_0,t_0).$$
Since $F(x_0,t_0)<0$, we have
$$8A\sqrt{1+|t_0|}\ge \rho(x_0,t_0)\ge \sqrt{1+|t_0|} \left(1 + \frac{|x_0z_{t_0}|_{t_0}^2}{9|t_0|}\right),$$
where we have applied \eqref{eq: rho quadratic}. Thus, we have
\begin{equation}\label{eq: dist estimate}
    |x_0z_{t_0}|_{t_0}\le\sqrt{72A(1+|t_0|)}.
\end{equation}

If we require the constant $D$ in \eqref{eq: sc lower bd near H-centers} to satisfy $D\ge \sqrt{100A}$, then by \eqref{eq: sc lower bd near H-centers}, we have 
$$R(x_0,t_0)\ge \frac{a_0}{1+|t_0|}.$$
By the definitions of $F$ in \eqref{def: F} and of $\rho$ in \eqref{eq: rho quadratic}, we have
\begin{align*}
    F(x_0,t_0)\ge \frac{a_0}{\sqrt{1+|t_0|}}-\frac{c(1+4A)}{\sqrt{1+|t_0|}}\ge 0,
\end{align*}
provided we take $c\le \frac{a_0}{1+4A}$; this is a contradiction, and the proof of Theorem \ref{thm: general thm} is complete.
\end{proof}

\subsection{Proofs of Corollaries \ref{Coro:ancient with smooth tangent flow}
and \ref{Coro:ancient type-I or harnack}}

\begin{proof}[Proof of Corollary \ref{Coro:ancient with smooth tangent flow}]
First of all, the ancient solution in question cannot be Ricci-flat. Since the tangent flow at infinity of a Ricci-flat static Ricci flow is its 
own
blow-down limit, which necessarily contains a singularity unless the manifold is flat.

We show by contradiction that \eqref{eq: sc lower bd near H-centers} holds for any $D>0$. Suppose that \eqref{eq: sc lower bd near H-centers} is false for some $D>0$. Since non-Ricci-flat ancient Ricci flows have positive scalar curvature \cite{Che09}, there must be a sequence $(x_i,t_i)$ with $t_i\to-\infty$ and $x_i\in B_{t_i}(z_{t_i},D\sqrt{1+|t_i|})$, such that $(1+|t_i|)R(x_i,t_i)\to0$. 
Take $|t_i|$ as the scaling factors when obtaining the tangent flow and let $z_{t_i}\to z_\infty\in M_{\infty}$. Then there is a point $x_\infty\in B(z_\infty, D)$ such that $R_\infty(x_\infty)=0$. By the strong maximum principle, we immediately have that $(M_\infty,g_\infty,f_\infty)$ is the Euclidean space, and $\mu_\infty=0$. The ancient solution in question must also be Euclidean space by Perelman's monotonicity formula; this is a contradiction.
\end{proof}

\begin{proof}[Proof of Corollary \ref{Coro:ancient type-I or harnack}]
    By \cite[Proposition 2.4]{MZ21}, under the assumptions of Hamilton's trace Harnack estimate \eqref{eq:TheTraceHarnack}
    and the curvature pinching condition $|{\Rm}|\le CR,$ Perelman's asymptotic shrinkers exist and are smooth. Thus, by \cite{CMZ21a}, each tangent flow at infinity is smooth. We may now apply Corollary \ref{Coro:ancient with smooth tangent flow}.

    For the type-I case, Cao and Zhang proved the existence of Perelman's asymptotic shrinkers in \cite[Theorem 4.1]{CZ11} and thus we can apply Corollary \ref{Coro:ancient with smooth tangent flow}.
\end{proof}

\subsection{Proofs of Corollaries  \ref{Coro: dist to H-center} and \ref{Coro: 4d steady quadratic}}

Corollary \ref{Coro: dist to H-center} bounds the scalar curvature from below according to the distance to certain anchor points on the level sets which play the same role as $H_n$-centers or $\ell$-centers (points where the $\ell$-distance is bounded by a fixed constant). 

For steady solitons, it 
is often
more convenient to work with the static metric as compared to the induced Ricci flow. Let us recall some notions from \cite{BCMZ21}.
Let $(M^n,g,f)$ be a steady gradient Ricci soliton and let $\Phi_t$ be the $1$-parameter group of diffeomorphisms generated by $-\nabla f.$
Fix a point $o\in M.$
We define
\[
\Lambda(x,\tau)
    := \inf \int_0^{\tau} \sqrt{s} \, \big(R_g+|\dot\gamma -\nabla f|_g^2 \big) (\gamma(s))\, ds,
\]
where the infimum is taken over all $\gamma:[0,\tau]\to M$ with $\gamma(0)=o$ and $\gamma(\tau)=x$.
Accordingly, define
\[
    \lambda(x,\tau) := \ell(\Phi_\tau(x),\tau)
    =: \frac{1}{2\sqrt{\tau}}\Lambda(x,\tau),
\]
where $\ell$ is Perelman's $\ell$-function.
Arguing as Perelman in \cite[Section 7.1]{Per02}, we have that, for any $\tau>0,$ there is a point $p_\tau\in M$ such that
$$\lambda(p_\tau,\tau)=\ell(\Phi_{\tau}(p_\tau),\tau) \le \frac{n}{2}.$$
Any such point $p_\tau$ is called a \textbf{$\lambda$-center} at time $-\tau$. See \cite[Section 2.1]{BCMZ21} for more details.


\begin{proof}[Proof of Corollary \ref{Coro: dist to H-center}] 
Recall that by Perelman's space-time comparison geometry, the trace Harnack estimate \eqref{eq:TheTraceHarnack} implies that 
\begin{equation}
\label{ineq: grad ell le C ell}
    \tau |\nabla \ell|^2 \le C\ell.
\end{equation}
By \cite[Lemma 2.4]{BCMZ21}, for $\tau\ge \bar\tau(n),$ we can find a point $x_\tau\in \partial B_{\tau}(o)$ satisfying
    \[
\lambda(x_\tau,\tau_0)\le C 
    \]
    for some $\tau_0\in [\tau/C,C\tau]$, where $C=C(n).$
    We write $\bar x_{\tau}=\Phi_{\tau_0}(x_{\tau}).$ Then
    \[
        \ell(\bar x_{\tau},\tau_0)
        = \lambda(x_\tau,\tau_0)\le C.
    \]
    By \cite[Corollary 9.5]{Per02} and \eqref{ineq: grad ell le C ell}, for any $\bar x=\Phi_{\tau_0}(x)\in M,$
    \[
        f_{-\tau_0}(\bar x)
        \le \ell(\bar x,\tau_0)
        \le C +C\frac{|\bar x\bar x_{\tau}|_{-\tau_0}^2}{\tau_0}
        \le C +C\frac{|xx_{\tau}|^2}{\tau},
    \]
    where $(4\pi |t|)^{-\frac{n}{2}} e^{-f_t(x)}=K(o,0\,|\,x,t).$

    By \cite[Theorem 1.13]{MZ21}, Perelman's asymptotic shrinkers exist and are smooth. We may now apply Corollary \ref{Coro:ancient with smooth tangent flow} to conclude that, for any $\bar x=\Phi_{\tau_0}(x)\in M,$ $\tau\ge \underline{\tau}(n),$
    \begin{align*}
        R(x)&= R(\Phi_{-\tau_0}(\bar x))
        = R(\bar x,-\tau_0) \\
        & \ge \frac{a}{\tau_0(f_{-\tau_0}(\bar x)+C(n)-\mu_\infty)}\\
        &\ge  \frac{a'}{(C(n)-\mu_\infty) \tau + |xx_\tau|^2}.
    \end{align*}
\end{proof}

\begin{proof}[Proof of Corollary \ref{Coro: 4d steady quadratic}]
Let $(M,g_t)_{t\in(-\infty,\infty)}$ be the canonical form of the steady soliton in question. Since the curvature and the pointed Nash entropy are both uniformly bounded, Bamler's theorems \cite{Bam20a,Bam20b,Bam20c} can be applied, due to Bamler's explanation in \cite{Bam21}. Thus, we may apply \cite[Proposition 3.1]{BCDMZ21} to the ancient solution $(M,g_t)_{t\in(-\infty,0]}$. By the classification of tangent flows at infinity therein and the non-Ricci-flat assumption, we have that the tangent flow at infinity is unique and smooth. Thus, we may apply Corollary \ref{Coro:ancient with smooth tangent flow}. Finally, fixing, say, $\tau=1$, and applying a standard Gaussian lower bound to the conjugate heat kernel (see e.g.~\cite[Theorem 26.31]{RFV3}), we obtain the quadratic decay lower bound for the scalar curvature. More precisely, by 
the 
Bishop--Gromov volume comparison theorem, 
the volumes of unit balls are uniformly bounded from above: for any $x\in M$,
\[
|B_1(x)|_g\leq C,
\]
where $|B_1(x)|_g$ is the volume of the unit geodesic ball taken with respect to the soliton metric $g$ and $C$ depends on the curvature bound of $g$. It can be seen from \cite[Theorem 26.31]{RFV3} that
\[
(4\pi )^{-\frac{n}{2}} e^{-f_{-1}(x)}=K(o,0\,|\,x,-1)\ge \frac{c_1}{\sqrt{|B_1(o)|_g}\sqrt{|B_1(x)|_g}}e^{-\frac{|ox|^2}{c_2}} \geq \frac{c_1}{C}e^{-\frac{|ox|^2}{c_2}} ,
\]
where $c_1$ and $c_2$ are positive constants depending only on the curvature bound of the soliton. Hence, by $|\nabla f|\leq \sqrt{C_1}$ \eqref{bdd for naf}, for any $x\in M$,
$$f_{-1}(\Phi_{1}(x))\leq C(|o\Phi_{1}(x)|^2+1)\leq  2C(|x\Phi_{1}(x)|^2+|ox|^2+1)\leq 2C(|ox|^2+C_1+1).$$ 
The quadratic decay bound of the scalar curvature now follows by applying Corollary \ref{Coro:ancient with smooth tangent flow} to the canonical form $(M,g_t)_{t\in(-\infty, 0]}$.
\end{proof}

\subsection{Recovering the shrinker estimate for singularity models}

In this section we show that we can consider Theorem \ref{thm: general thm} as a generalization of the lower scalar curvature estimate for shrinkers of \cite{CLY11}, at least in the case of singularity models.

Let $(M^n,g,\bar f)$ be a complete shrinking gradient Ricci soliton satisfying
    \[
        \Ric+\nabla^2\bar f = \tfrac{1}{2}g,\quad
        R+|\nabla \bar f|^2 = \bar f.
    \]
Let $(\Phi_t)_{t\in \IR}$ be the $1$-parameter group of diffeomorphisms generated by $\nabla \bar f.$ Then
\[
    g_t := |t| \Phi^*_{-\ln|t|} g
\]
solves the Ricci flow and is called the canonical form generated by the shrinking soliton. 
Recall that if we set 
$$\bar f_t := \Phi_{-\ln|t|}^*\bar f,$$
then
\[
\Ric_{g_t} + \nabla^2_{g_t}\bar f_t
= \tfrac{1}{2\tau} g_t,\quad
\tau(R_{g_t}+|\nabla \bar f_t|^2_{g_t})=\bar f_t.
\]
We have the following simple lemma.
\begin{Lemma}
    Let $(M^n,g_t)_{t<0}$ be the canonical form induced by a complete shrinking gradient Ricci soliton $(M^n,g,\bar f)$. Then for any $x\in M,$ $t<-1,$
    \[
        \int_M |t|\bar f_t \, d\nu_{x,-1|t}
        = \bar f(x) + \tfrac{n}{2}(-1-t).
    \]
\end{Lemma}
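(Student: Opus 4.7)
The plan is to recognize that along a shrinker canonical form, the time-dependent potential $\bar f_t$ satisfies an especially simple heat-type equation, so that the integrand $|t|\bar f_t$ is essentially linear along conjugate heat kernel measures. I would then integrate along $t$ from $-1$ downwards, using that $\nu_{x,-1|-1} = \delta_x$ and $\bar f_{-1} = \Phi_0^* \bar f = \bar f$.

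First I would compute $\Box \bar f_t$. Differentiating $\bar f_t = \Phi^*_{-\ln|t|}\bar f$ in $t$ and using that $\Phi_s$ is generated by $\nabla \bar f$ on the static metric $g$, together with the scaling $g_t = |t|\Phi_{-\ln|t|}^* g$, gives $\partial_t \bar f_t = |\nabla \bar f_t|^2_{g_t}$. Taking the trace of $\Ric_{g_t}+\nabla^2_{g_t}\bar f_t = \tfrac{1}{2\tau}g_t$ yields $\Delta_{g_t}\bar f_t = \tfrac{n}{2\tau}-R_{g_t}$, so combining with $\tau(R_{g_t}+|\nabla \bar f_t|^2_{g_t}) = \bar f_t$ gives
\[
  \Box \bar f_t = |\nabla \bar f_t|^2_{g_t} - \Delta_{g_t}\bar f_t = \frac{\bar f_t - \tfrac{n}{2}}{\tau}.
\]
Multiplying through by $|t| = \tau$ and using $\partial_t(\tau \bar f_t) = -\bar f_t + \tau \partial_t \bar f_t$ then gives the clean identity $\Box(|t|\bar f_t) = -\tfrac{n}{2}$.

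Second, I would invoke the standard evolution identity for conjugate heat kernel measures: for any smooth $\phi$ of reasonable growth,
\[
  \frac{d}{dt}\int_M \phi\, d\nu_{x,-1|t} = \int_M \Box\phi\, d\nu_{x,-1|t},
\]
which follows from $\Box^* K(\cdot|x,-1)=0$ together with $\partial_t(dg_t) = -R\, dg_t$ and integration by parts. Applying this to $\phi = |t|\bar f_t$ and using the previous step yields $\frac{d}{dt}\int_M |t|\bar f_t\, d\nu_{x,-1|t} = -\tfrac{n}{2}$.

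Finally, I would integrate this ODE from $-1$ to $t$. Since $\nu_{x,-1|-1}=\delta_x$ and $\bar f_{-1}(x)=\bar f(x)$, the initial value at $s=-1$ is exactly $\bar f(x)$, and integrating the constant derivative gives
\[
  \int_M |t|\bar f_t\, d\nu_{x,-1|t} = \bar f(x) - \tfrac{n}{2}(t+1) = \bar f(x) + \tfrac{n}{2}(-1-t),
\]
as claimed. The only delicate point is justifying the differentiation-under-the-integral and integration-by-parts for the unbounded function $\bar f_t$, but this is routine from the Gaussian decay of the conjugate heat kernel on shrinkers together with the quadratic growth of $\bar f$ from the soliton identity $R+|\nabla \bar f|^2=\bar f$.
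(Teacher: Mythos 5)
Your proof is correct and follows essentially the same route as the paper: both reduce to the identity $\Box\bar f_t = \tfrac{1}{\tau}\bar f_t - \tfrac{n}{2\tau}$ and then integrate $\partial_t\int\bar f_t\,d\nu_t = \int\Box\bar f_t\,d\nu_t$, the only cosmetic difference being that you multiply by $\tau$ first to get the constant source $\Box(\tau\bar f_t)=-\tfrac{n}{2}$, whereas the paper solves the resulting first-order linear ODE for $\int\bar f_t\,d\nu_t$ directly (which amounts to the same integrating factor $\tau$).
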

\begin{proof}
For any $t<-1,$ writing $\nu_t=\nu_{x,-1|t},$
    \begin{align*}
        \partial_t \int_M \bar f_t \, d\nu_t
        &= \int_M \Box \bar f_t \, d\nu_t
        = \frac{1}{\tau}\int_M \bar f_t\, d\nu_t- \frac{n}{2\tau}.
    \end{align*}
    See, e.g., \cite{LW20} for the justification of integration by parts at infinity.
    Integrating the equation above from $t$ to $-1$, we have
    \[
        \bar f(x) - \tau\int_M \bar f_t \, d\nu_t 
        = -\tfrac{n}{2}(-1-t),
    \]
    and the conclusion follows.
\end{proof}

As a corollary, 
the
$H_n$-centers of any minimum point $o$ of the potential function $\bar f$ on shrinkers are not far away from $o$.
\begin{Proposition}
\label{prop: H-centers on shrinkers}
    Let $(M^n,g_t)_{t<0}$ be the canonical form induced by a complete shrinking gradient Ricci soliton $(M^n,g,\bar f)$. 
    Let $o$ be a minimum point of $\bar f$. Then for any $t<-1,$ if $(z,t)$ is an $H_n$-center of $(o,-1),$ then
    \[
        |oz|_t \le C(n)\sqrt{\tau}.
    \]
\end{Proposition}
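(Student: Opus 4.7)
The plan is to combine three ingredients to bound $|oz|_t$: the variance characterization of $H_n$-centers, the integral identity supplied by the preceding lemma, and the Cao--Zhou quadratic growth bound for the potential function of a shrinker.

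First, since $o$ is a critical point of $\bar f$ we have $\nabla \bar f(o)=0$, so $\Phi_s(o)=o$ for all $s$, and consequently $\bar f_t = \Phi_{-\ln|t|}^*\bar f$ still attains its minimum at $o$ with $\bar f_t(o)=\bar f(o)$. At this critical point, the shrinker identities $\Ric+\nabla^2\bar f=\tfrac{1}{2}g$ and $R+|\nabla\bar f|^2=\bar f$ combined with $\nabla^2\bar f(o)\ge 0$ give $\bar f(o)=R(o)\le n/2$. Inserting this into the preceding lemma yields
\[
    \int_M \bar f_t \, d\nu_{o,-1|t}
    \le \frac{\bar f(o)}{|t|} + \frac{n(|t|-1)}{2|t|} \le C(n).
\]

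Next, I would apply the Cao--Zhou quadratic lower bound on the normalized shrinker $(M,\tau^{-1}g_t,\bar f_t)$, which satisfies $\Ric+\nabla^2\bar f_t=\tfrac12(\tau^{-1}g_t)$ and $R+|\nabla\bar f_t|^2=\bar f_t$ with minimum still at $o$. Unwinding the scaling gives
\[
    \bar f_t(x) \ge \frac{1}{4\tau}\bigl(|xo|_t - c(n)\sqrt{\tau}\bigr)_+^2,
\]
and integrating this against $\nu_{o,-1|t}$ against the bound from the previous paragraph yields
\[
    \int_M |xo|_t^2 \, d\nu_{o,-1|t} \le C'(n)\,\tau.
\]

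Finally, by definition of $H_n$-center, $\int_M |xz|_t^2\, d\nu_{o,-1|t} \le H_n(|t|-1) \le H_n\tau$. Using the elementary inequality $|oz|_t^2 \le 2(|ox|_t^2+|xz|_t^2)$ and integrating against the unit-mass measure $\nu_{o,-1|t}$,
\[
    |oz|_t^2 \le 2H_n\tau + 2\int_M |xo|_t^2 \, d\nu_{o,-1|t} \le C(n)\,\tau,
\]
which is the desired estimate. The only real obstacle is careful bookkeeping of the scaling between the static shrinker $(M,g,\bar f)$ and the time slice $(M,g_t,\bar f_t)$ so that the Cao--Zhou bound applies on each slice with dimensional constants; once this is settled, the argument reduces to elementary second-moment manipulations.
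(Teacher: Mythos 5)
Your proof is correct and follows essentially the same path as the paper's: bound $\int\bar f_t\,d\nu_{o,-1|t}$ via the preceding lemma together with $\bar f(o)\le n/2$, convert this into a second-moment bound $\int|oy|_t^2\,d\nu\le C(n)\tau$ using the Cao--Zhou growth estimate (correctly rescaled to the time slice), and combine with the $H_n$-center variance bound. The only difference is cosmetic: you finish with the elementary inequality $|oz|_t^2\le 2(|ox|_t^2+|xz|_t^2)$ integrated against $\nu$, whereas the paper invokes the $W_1$-triangle inequality via Bamler's Lemma 3.2 --- the two are equivalent.
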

\begin{proof}
    By the fundamental growth estimate of the potential function $\bar f$ by Cao and Zhou \cite{CZ10}, 
    for any $y\in M,t<-1,$
    \[
        \tau\bar f_t(y)
        \ge \tfrac{1}{4}\left(|oy|_t -C\sqrt{\tau}\right)_+^2,
    \]
    where $C=C(n).$
    Write $\nu_t=\nu_{o,-1|t}.$
    Then for any $t<-1,$
    \begin{align*}
        \int_M |oy|_t^2\, d\nu_{t}(y) 
        &\le   \int_M \left(|oy|_t-C\sqrt{\tau}\right)_+^2 
        \, d\nu_t(y)
        + C\tau\\
        &\le  C\tau + 4\tau\int_M \bar f_t \, d\nu_t\\
        &\le C\tau,
    \end{align*}
    where
    we have used the previous lemma and the fact that $\bar f(o)\leq n/2$ in the last inequality.
Thus, by \cite[Lemma 3.2]{Bam20a},
\begin{align*}
    |oz|_t &\le
    \dist_{W_1}^{g_t}(\delta_{o}, \nu_t)
    +  \dist_{W_1}^{g_t}(\delta_{z}, \nu_t)
    \le  C\sqrt{\tau}.
\end{align*}
\end{proof}

\begin{Corollary}
    Let $(M^n,g_t)_{t<0}$ be the canonical form induced by a complete non-flat shrinking gradient Ricci soliton $(M^n,g,\bar f)$. 
    Suppose also that it is a singularity model or $(M^n,g)$ has bounded curvature. Then for any $D<\infty,$ the condition \eqref{eq: sc lower bd near H-centers} holds for some $a_0>0$, after shifting the time by $-1$.
    As a consequence of Theorem \ref{thm: general thm}, we can recover the main theorem of \cite{CLY11} in this case.
\end{Corollary}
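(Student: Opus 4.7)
The plan is to verify \eqref{eq: sc lower bd near H-centers} for the shifted canonical form so that Theorem \ref{thm: general thm} applies, and then to convert the resulting estimate into the \cite{CLY11} shrinker bound via standard heat-kernel considerations.

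Choose $o$ to be a minimum point of $\bar f$, so that $\nabla\bar f(o)=0$ and hence $\Phi_s$ fixes $o$ for every $s$. After shifting time by $-1$, the basepoint for the $H_n$-centers becomes $(o,0)$ in the new time $\tilde t$. Writing $t=\tilde t-1$ and $\tau=-t=1+\tilde\tau$, Proposition \ref{prop: H-centers on shrinkers} yields $|oz_t|_t\le C(n)\sqrt{\tau}$, and hence
\[
    B_{\tilde t}\bigl(z_{\tilde t},\,D\sqrt{1+\tilde\tau}\bigr)\subseteq B_t\bigl(o,\,(C(n)+D)\sqrt\tau\bigr).
\]
The self-similar relation $g_t=\tau\,\Phi^*_{-\ln\tau}g$, together with $\Phi_{-\ln\tau}(o)=o$, gives the clean equivalence $x\in B_t(o,r)\iff \Phi_{-\ln\tau}(x)\in B_g(o,r/\sqrt\tau)$, while the scaling $R_{g_t}(x)=\tau^{-1}R_g(\Phi_{-\ln\tau}(x))$ transforms the time-dependent inequality to the static one
\[
    (1+\tilde\tau)R(x,\tilde t) = R_g\bigl(\Phi_{-\ln\tau}(x)\bigr).
\]
Thus \eqref{eq: sc lower bd near H-centers} reduces to $\inf_{\bar B_g(o,C(n)+D)}R_g>0$, and the closed ball is compact by completeness of $(M,g)$.

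This latter bound is a consequence of the strong maximum principle applied to $\partial_tR=\Delta R+2|\Ric|^2$: Chen \cite{Che09} gives $R\ge 0$, and if $R$ vanished somewhere it would vanish identically, forcing $\Ric\equiv 0$ and (via $\Ric+\nabla^2\bar f=\tfrac12 g$) a Gaussian shrinker, contrary to the non-flatness hypothesis. So $a_0:=\inf_{\bar B_g(o,C(n)+D)}R_g>0$ and \eqref{eq: sc lower bd near H-centers} holds for every $D<\infty$. In either of the Corollary's two cases the Nash-entropy lower bound \eqref{eq: Nash entropy lower bd} is satisfied (automatic for singularity models, and standard for shrinkers with bounded curvature), so Theorem \ref{thm: general thm} applies and yields
\[
    (1+\tilde\tau)R(x,\tilde t)\ge \frac{c}{f_{\tilde t}(x)+C(n)-\mu_\infty}.
\]

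To conclude with the \cite{CLY11} estimate, one compares the conjugate-heat-kernel potential $f_{\tilde t}$ with the soliton potential $\bar f$. The key ingredients are that $(4\pi|t|)^{-n/2}e^{-\bar f_t}$ is itself a positive solution of the conjugate heat equation on the shrinker, that the preceding Lemma controls the $\nu_{o,-1|t}$-average of $\bar f_t$ exactly, and that a Gaussian lower bound on the conjugate heat kernel (as invoked in the proof of Corollary \ref{Coro: 4d steady quadratic}) combined with the Cao--Zhou quadratic growth of $\bar f$ produces a pointwise upper bound of the form $f_{\tilde t}(x)\le C_1+C_2\bar f_{\tilde t}(x)$ at a fixed time. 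Plugging this bound into the Theorem \ref{thm: general thm} estimate and using the first-paragraph identities $R_g\circ\Phi_{-\ln\tau}=\tau R_{g_t}$ and $\bar f_t=\bar f\circ\Phi_{-\ln\tau}$ turns the time-dependent bound into a static one, $R_g(y)\ge c/(1+\bar f(y))$ for $y\in M$, which a fortiori implies the exponential bound $R_g\ge ce^{-\bar f}$ of \cite{CLY11}. The main obstacle is precisely this comparison of $f_{\tilde t}$ with $\bar f$: the preceding Lemma only gives the $\nu$-average of $\bar f_t$, so upgrading to a uniform pointwise bound on $f_{\tilde t}$ requires an additional Gaussian heat-kernel argument tailored to the self-similar structure of the shrinker.
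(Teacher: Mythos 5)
Your verification of \eqref{eq: sc lower bd near H-centers} is essentially the same as the paper's proof: pick $o$ a minimum of $\bar f$ (so that $\Phi_s$ fixes $o$), use Proposition \ref{prop: H-centers on shrinkers} to locate the $H_n$-centers within $C(n)\sqrt\tau$ of $o$, pull back via $\Phi_{\ln\tau}$ to a fixed ball $B_g(o,\cdot)$, and use $R_{g_t}(x)=\tau^{-1}R_g(\Phi_{-\ln\tau}(x))$ together with positivity of $R_g$ on compact sets. Your explicit strong-maximum-principle justification of $a_0>0$ (from $R\ge0$ and the evolution $\partial_t R=\Delta R+2|\Ric|^2$ on the canonical form, ruling out $R\equiv0$ by non-flatness) is a fine way to fill in what the paper leaves to ``Since $(M,g)$ is not Ricci-flat, we can choose $a_0>0$.'' Your time-shift bookkeeping, giving $(1+\tilde\tau)R(x,\tilde t)=R_g(\Phi_{-\ln\tau}(x))$ with $\tau=1+\tilde\tau$, is correct and arguably cleaner than the paper's factor-of-two slack.

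Two remarks on the second half. First, the paper's own proof of this corollary \emph{only} checks \eqref{eq: sc lower bd near H-centers} and then asserts that Theorem \ref{thm: general thm} ``recovers'' \cite{CLY11}; the pointwise comparison of the conjugate-heat-kernel potential $f_{\tilde t}$ with the soliton potential $\bar f$ that you sketch is not carried out there. Your outline (Gaussian lower bound on the conjugate heat kernel plus Cao--Zhou growth of $\bar f$ to get $f_{\tilde t}\le C_1+C_2\bar f_{\tilde t}$ at a fixed time) is a reasonable route, and you correctly flag that this step is where the actual work lies. Second, there is a small misattribution at the end: the shrinker theorem of \cite{CLY11} is exactly the quadratic-decay lower bound $R_g\ge c/(1+\bar f)\asymp c/(1+|xo|^2)$ that you derive; the exponential lower bound $R\ge ce^{-f}$ is their result for \emph{steady} (and expanding) solitons, not shrinkers. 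So your derived estimate is the right one, but it should be identified as the CLY11 shrinker bound itself rather than as something that ``a fortiori implies'' an exponential CLY11 bound.
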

\begin{proof}
    Let $o$ be a minimum point of $\bar f.$ By Proposition \ref{prop: H-centers on shrinkers}, for any $t<-1$, and any $H_n$-center $(z_t,t)$ of $(o,-1),$
    \[
        |oz_t|_t \le C\sqrt{\tau}.
    \]
    Since $(M^n,g)$ is not Ricci-flat,  we can choose $a_0>0$ such that
    \[
        \inf_{B(o,2(D+C))} R \ge a_0.
    \]
    Note that
    \[
        B_t\big(z_t,D\sqrt{1+\tau}\,\big)
        \subset B_t\big(o,(D+C)\sqrt{1+\tau}\,\big)
        = \Phi_{\ln|t|}\left(B\left(o, (D+C)\sqrt{(1+\tau)/\tau} \right)\right).
    \]
    Recall that $R(x,t)=R(\Phi_{-\ln|t|}(x))/\tau.$
    Thus, \eqref{eq: sc lower bd near H-centers} holds.
\end{proof}

\bibliographystyle{amsalpha}

\newcommand{\alphalchar}[1]{$^{#1}$}
\providecommand{\bysame}{\leavevmode\hbox to3em{\hrulefill}\thinspace}
\providecommand{\MR}{\relax\ifhmode\unskip\space\fi MR }
\providecommand{\MRhref}[2]{%
  \href{http://www.ams.org/mathscinet-getitem?mr=#1}{#2}
}
\providecommand{\href}[2]{#2}

\end{document}